\documentclass[11pt,reqno]{amsart}

\usepackage[english]{babel}

\usepackage[letterpaper,top=2cm,bottom=2cm,left=3cm,right=3cm,marginparwidth=1.75cm]{geometry}

\usepackage{amsmath, amsrefs, enumitem, mathtools, lmodern, microtype, mathrsfs, listings, tikz-cd, float}
\usepackage{amssymb}
\usepackage{graphicx}
\usepackage[colorlinks=true, allcolors=blue]{hyperref}
\usepackage{comment}
\usepackage{enumerate}

\newcommand{\N}{\mathbb{N}}

\usepackage[colorlinks=true, allcolors=blue]{hyperref}
\theoremstyle{plain}
\newtheorem{theorem}{Theorem}[section]

\newtheorem{lemma}[theorem]{Lemma}

\theoremstyle{definition}
\newtheorem{defin}[theorem]{Definition}
\newtheorem{rmk}{Remark}
\numberwithin{equation}{section}

\title{The difference of the sums of odd and even parts of restricted partitions}
\author{Kilian Rausch and Johann Stumpenhusen} 
\address{Department of Mathematics and Computer Science, Division of Mathematics, University of Cologne, Weyertal 86-90, 50931 Cologne, Germany}
\email{krausch1@uni-koeln.de}
\email{jstumpen@math.uni-koeln.de }

\begin{document}

\subjclass[2020]{11P81, 11P83, 05A17}
\keywords{Combinatorics, Congruences, Generating functions, Integer partitions}
\maketitle

\begin{abstract}
For a subset $D \subset \mathbb{N},$ denote by $S\left(D,n\right)$ the total sum of all odd parts minus the sum of all even parts of all partitions of $n$ in which parts from $D$ do not repeat.  In this paper, we derive the generating function for $S\left(2\mathbb{N},n\right)$ and use it to give some congruences modulo 4. We also derive the generating function for $S\left(2\mathbb{N}-1,n\right)$ and give two congruences for these functions, one of which was previously proved by Garvan and Sarma. 
\end{abstract}

\section{Introduction}
A partition $\lambda=(\lambda_1,...,\lambda_s)$ of a natural number $n$ is a non-increasing sequence of natural numbers such that $\sum_{j=1}^s\lambda_j=n.$ The number of all partitions of $n$ is denoted by $p(n),$ with $p(0):=1$ by convention. The generating function for the number of partitions is given by
\begin{align*}
    \sum_{n=0}^\infty p(n)q^n = \prod_{n=1}^\infty \left(1-q^n\right)^{-1}.
\end{align*}
The expression on the right-hand side  is closely related to the $q$-Pochhammer symbol which for $a\in \mathbb{C}$ and $n \in \mathbb{N}_0 \cup \{\infty\}$ will be denoted  by $(a;q)_n:= \prod_{m=0}^{n-1} \left(1-aq^m \right)$ here and throughout.
Instead of counting the number of partitions of a certain number $n,$ we are interested in the difference between the sum of all odd parts and the sum of all even parts of a subset of all partitions of $n$. 
\begin{defin}
 Denote by $S\left(D,n\right)$ the sum of all odd parts minus the sum of all even parts of all partitions of $n$ in which parts from $D$ do not repeat. 
\end{defin}
In \cite{AndDast}, Andrews and Dastidar studied the function $S\left(\emptyset,n\right)$ (SOME$(n)$  in their notation), the difference between the sum of all odd parts minus the sum of all even parts in all  partitions of $n$, as well as  $S\left(\mathbb{N},n\right)$ (DSOME$(n)$ in their notation) where only partitions with distinct parts are considered. They proved the following theorem.
\begin{theorem}[\cite{AndDast}*{Theorems 3 and 9}]\label{thrmAndDast}
    The generating functions for 
    $S\left(\emptyset,n \right)$ and $S\left(\mathbb{N},n \right)$ are given by
    \begin{align*}
    \sum_{n=1}^\infty S\left(\emptyset,n \right)q^n&= \frac{1}{\left(q;q\right)_\infty} \sum_{n=1}^\infty \frac{q^n}{\left( 1+q^n\right)^2}, \\
    \sum_{n=1}^\infty S\left(\mathbb{N},n \right)q^n&= \left(-q;q\right)_\infty \sum_{n=1}^\infty \frac{(-1)^{n-1}q^n}{\left(1+q^n\right)^2}.
\end{align*}
\end{theorem}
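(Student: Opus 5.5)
The plan is to attach a second variable that records the signed part sum and then differentiate. For $S(\emptyset,n)$, weight each part $k$ by $z^{(-1)^{k-1}k}$, so odd parts contribute $+k$ and even parts $-k$ to the exponent of $z$. The two-variable generating function is
\begin{align*}
F(z,q)=\prod_{k=1}^\infty \frac{1}{1-z^{(-1)^{k-1}k}q^k}.
\end{align*}
Applying $\partial_z$ and setting $z=1$ gives $\sum_n S(\emptyset,n)q^n$, since each partition is counted with multiplicity equal to its signed part sum. Logarithmic differentiation turns this into
\begin{align*}
\frac{1}{(q;q)_\infty}\sum_{k=1}^\infty \frac{(-1)^{k-1}k\,q^k}{1-q^k}.
\end{align*}

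It then remains to prove the Lambert series identity
\begin{align*}
\sum_{k\ge1}\frac{(-1)^{k-1}kq^k}{1-q^k}=\sum_{n\ge1}\frac{q^n}{(1+q^n)^2}.
\end{align*}
Expand both sides as power series. On the left, the coefficient of $q^N$ is $\sum_{k\mid N}(-1)^{k-1}k$. On the right, $\frac{q^n}{(1+q^n)^2}=\sum_{m\ge1}(-1)^{m-1}m\,q^{nm}$, so the coefficient of $q^N$ is $\sum_{m\mid N}(-1)^{m-1}m$, which is the same divisor sum. This proves the first formula.

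For $S(\mathbb{N},n)$, the same weighting gives
\begin{align*}
G(z,q)=\prod_{k\ge1}\bigl(1+z^{(-1)^{k-1}k}q^k\bigr).
\end{align*}
Logarithmic differentiation at $z=1$ yields
\begin{align*}
(-q;q)_\infty\sum_{k\ge1}\frac{(-1)^{k-1}kq^k}{1+q^k}.
\end{align*}
Expanding $\frac{1}{1+q^k}$ geometrically, the inner sum becomes
\begin{align*}
\sum_{k,m\ge1}(-1)^{k-1}(-1)^{m-1}k\,q^{km}.
\end{align*}
Summing first over $k$ for fixed $m$ gives $\sum_m (-1)^{m-1}\frac{q^m}{(1+q^m)^2}$, which is the claimed series.

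The main obstacle is the bookkeeping: interchanging the double sums and tracking the signs. Everything converges absolutely for $|q|<1$, and the formal power series viewpoint makes the interchange legitimate, so I expect no real difficulty.
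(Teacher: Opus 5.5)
Your proof is correct: the weighting $z^{(-1)^{k-1}k}$, the logarithmic derivative at $z=1$, the coefficient comparison via $\sum_{d\mid N}(-1)^{d-1}d$, and the double-sum interchange for the distinct-parts case all check out. The paper does not prove this statement itself (it quotes Andrews and Dastidar), but your argument is the same method the paper uses for its analogous Theorems \ref{geneSDOME} and \ref{geneSOMDE}: mark odd parts by $zq$ and even parts by $q/z$, differentiate at $z=1$, then expand geometrically and interchange the double sum.
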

From these, they derived some results regarding the divisibility of these partition functions evaluated at certain points.
\begin{theorem}[\cite{AndDast}*{Theorems 4, 5 and 10}]
   For $n \in \mathbb{N}_0$, it holds that
    \begin{align*}
        S\left(\emptyset,5n+2 \right)\equiv S\left(\emptyset,5n+4 \right)\equiv 0 \pmod{5} \quad and\quad S\left(\mathbb{N},4n \right) \equiv 0 \pmod{4}.
    \end{align*}
\end{theorem}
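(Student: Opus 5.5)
The plan is to work directly from the generating functions in Theorem~\ref{thrmAndDast} and reduce each congruence to a statement about power series with integer coefficients modulo the relevant prime power.

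\textbf{The congruences modulo $5$ for $S(\emptyset,n)$.}

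First I rewrite the Lambert-type series. Expanding $\frac{q^n}{(1+q^n)^2}=\sum_{m\ge1}(-1)^{m-1}m\,q^{nm}$ gives
\begin{align*}
\sum_{n\ge1}\frac{q^n}{(1+q^n)^2}=\sum_{N\ge1}\Big(\sum_{m\mid N}(-1)^{m-1}m\Big)q^N .
\end{align*}
Splitting $m$ by parity, I express this through $\sigma(N)$ and $\sigma(N/2)$. The result is $E(q):=\sum_N(\sigma(N)-4\sigma(N/2))q^N$, which is an Eisenstein-type series of weight $2$:
\begin{align*}
E(q)=\frac{1}{24}\big(P(q)-1\big)-\frac{4}{24}\big(P(q^2)-1\big)\cdot(-1),
\end{align*}
up to sign conventions, where $P=1-24\sum\sigma(n)q^n$.

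Next I multiply by $1/(q;q)_\infty$ and work modulo $5$. Since $(q;q)_\infty^5\equiv(q^5;q^5)_\infty \pmod 5$, we have
\begin{align*}
\frac{1}{(q;q)_\infty}\equiv\frac{(q;q)_\infty^4}{(q^5;q^5)_\infty}\pmod 5.
\end{align*}
It therefore suffices to show that the coefficients of $q^{5n+2}$ and $q^{5n+4}$ in $(q;q)^4_\infty E(q)$ vanish modulo $5$. For this I use $q\frac{d}{dq}$-identities: $\sum\sigma(n)q^n$ is, up to a constant, the logarithmic derivative of $(q;q)_\infty$, so that
\begin{align*}
(q;q)_\infty^4\sum\sigma(n)q^n=-\tfrac14\, q\tfrac{d}{dq}(q;q)_\infty^4 ,
\end{align*}
and similarly with $(q^2;q^2)_\infty$. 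I then combine these with Jacobi's identity for $(q;q)^3_\infty$ and Euler's pentagonal theorem, reading off exponents modulo $5$. The exponents of $(q;q)_\infty^4=(q;q)_\infty\cdot(q;q)_\infty^3$ have the form $\frac{k(3k-1)}{2}+\frac{j(j+1)}{2}$, and multiplying coefficients by exponents (the effect of $q\,d/dq$) kills the residues where $5\mid N$.

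\textbf{The main obstacle.} The $\sigma(N/2)$ part mixes $(q;q)_\infty$ with $(q^2;q^2)_\infty$, so the mod-$5$ residue analysis is not immediate. I would handle it by writing the whole series as a combination of logarithmic derivatives of eta quotients, namely $(q^2;q^2)_\infty^a/(q;q)_\infty^b$, and checking which quadratic forms occur in the exponents and which residues they miss modulo $5$. If that fails, a fallback is to note that the generating function times a suitable power of $\eta$ is a modular form, and verify the congruence by a Sturm-bound computation.

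\textbf{The congruence $S(\mathbb N,4n)\equiv0\pmod4$.}

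I expand $\sum_n(-1)^{n-1}q^n/(1+q^n)^2$ similarly. Its coefficient of $q^N$ is $\sum_{d m=N}(-1)^{d-1}(-1)^{m-1}m$. Modulo $4$, only the parity and the residue class of $m$ matter.

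I also use $(-q;q)_\infty=1/(q;q^2)_\infty$ together with $(-q;q)_\infty^2\equiv(-q^2;q^2)_\infty \pmod 2$ to pass to a form in which I can track coefficients in the arithmetic progression $4n$. The case analysis over the $2$-adic valuation of $N$ then shows that the $q^{4n}$-dissection vanishes modulo $4$.
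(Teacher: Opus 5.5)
\emph{Modulo $5$.} The mod-$5$ part has a genuine gap. The mechanism you describe ($q\frac{\partial}{\partial q}$ multiplies the coefficient of $q^N$ by $N$) only kills $N\equiv 0\pmod 5$, but the statement concerns $N\equiv 2,4\pmod 5$.

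Treating the $\sigma(N)$ and $\sigma(N/2)$ pieces separately also cannot work. The first piece gives $\frac{1}{(q;q)_\infty}\sum_N\sigma(N)q^N=\sum_n np(n)q^n$, whose coefficient of $q^2$ is $2p(2)=4\not\equiv 0\pmod 5$. We get $S(\emptyset,2)=0$ only because the $-4\sigma(N/2)$ piece contributes $-4$. So the cancellation between the pieces is the whole content, and your ``main obstacle'' paragraph leaves exactly this unresolved. The Sturm-bound fallback is a hope, not an argument.

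The missing identity is your quotient with $a=2$, $b=1$:
\[
\sum_{n\ge1}\frac{q^n}{(1+q^n)^2}=q\frac{\partial}{\partial q}\log\psi(q),\qquad \psi(q)=\frac{(q^2;q^2)_\infty^2}{(q;q)_\infty}=\sum_{n\ge0}q^{n(n+1)/2}.
\]
With it, the factor $1/(q;q)_\infty$ cancels outright, instead of being replaced by $(q;q)_\infty^4/(q^5;q^5)_\infty$:
\begin{align*}
\sum_{n\ge1}S(\emptyset,n)q^n&=\frac{1}{(q^2;q^2)_\infty^2}\sum_{n\ge0}\frac{n(n+1)}{2}q^{\frac{n(n+1)}{2}}\\
&\equiv\frac{1}{(q^{10};q^{10})_\infty}\sum_{m\ge0}(-1)^m(2m+1)q^{m(m+1)}\sum_{n\ge0}\frac{n(n+1)}{2}q^{\frac{n(n+1)}{2}}\pmod 5.
\end{align*}
For $N=m(m+1)+\frac{n(n+1)}{2}$ one has $8N+3=2(2m+1)^2+(2n+1)^2$.
\begin{itemize}
\item If $N\equiv 4$, this forces $5\mid 2m+1$, since $-2$ is a non-residue modulo $5$.
\item If $N\equiv 2$, it forces either $5\mid 2m+1$ or $2n+1\equiv\pm1$, i.e.\ $5\mid\frac{n(n+1)}{2}$.
\end{itemize}
This is the Andrews--Dastidar structure that the paper imitates for $S(2\mathbb{N}-1,5n)$. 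Separately, your expression for $E$ has the wrong signs; it should be $E=\frac{1}{24}\left(4P(q^2)-P(q)-3\right)$.

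\emph{Modulo $4$.} Here nothing is actually proved, and the tool you name cannot suffice on its own. The coefficients $c(N)=\sum_{dm=N}(-1)^{d+m}m$ are frequently odd (e.g.\ $c(1)=1$). So mod-$2$ information such as $(-q;q)_\infty^2\equiv(-q^2;q^2)_\infty$ does not determine $(-q;q)_\infty\sum_N c(N)q^N$ modulo $4$, and the promised $2$-adic case analysis is never given.

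The missing observation is that $(1+x)^2-(1-x)^2=4x$, so modulo $4$ you may replace $(1+q^n)^{-2}$ by $(1-q^n)^{-2}$. Then
\begin{align*}
\sum_{n\ge1}\frac{(-1)^{n-1}q^n}{(1-q^n)^2}=\sum_{n\ge1}\frac{nq^n}{1+q^n}=q\frac{\partial}{\partial q}\log\left((-q;q)_\infty\right).
\end{align*}
Hence $\sum_n S(\mathbb{N},n)q^n\equiv q\frac{\partial}{\partial q}(-q;q)_\infty=\sum_n np_{\mathbb{N}}(n)q^n\pmod 4$, which gives $S(\mathbb{N},4n)\equiv 0\pmod 4$ at once. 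This is the same device the paper uses for Theorem~\ref{4SOMDE}. Combinatorially it just says that $np_{\mathbb{N}}(n)-S(\mathbb{N},n)$ is twice the total of all even parts, hence divisible by $4$.
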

Instead of inspecting unrestricted and distinct partitions, one may study these differences for partitions where either all odd or all even parts are distinct and the other parts are unrestricted. 
In a recent preprint, Garvan and Sarma \cite{GarSar} considered partitions without repeated odd parts. After investigating the sum of all odd parts and the sum of all even parts in these partitions, they obtained the following result.
\begin{theorem}[\cite{GarSar}*{Theorems 1.11 and 1.12}]\label{5SDOME}
   For $n \in \mathbb{N},$ it holds that
       \begin{align*}
        S_o \left(5n \right)\equiv S_e\left(5n \right)\equiv 0 \pmod{5},
    \end{align*}
    where $S_o(n)$ (resp. $S_e(n)$) denotes the sum of all odd (resp. even) parts in all partitions of $n$ with distinct odd parts. In particular, it follows that
    \begin{align*}
         S\left(2\mathbb{N}-1,5n \right)\equiv 0 \pmod{5}. 
    \end{align*}
\end{theorem}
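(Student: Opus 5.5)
We begin with the generating functions. A partition with distinct odd parts and unrestricted even parts has generating function
\[
P(q):=\frac{(-q;q^2)_\infty}{(q^2;q^2)_\infty}=\frac{(q^2;q^2)_\infty}{(q;q)_\infty(q^4;q^4)_\infty},
\]
where the second form follows from $(-q;q^2)_\infty=(q^2;q^4)_\infty/(q;q^2)_\infty$. We mark each odd (resp.\ even) part by a variable and differentiate. This gives
\[
\sum_{n\ge1}S_o(n)q^n=P(q)\sum_{k\ge1}\frac{(2k-1)q^{2k-1}}{1+q^{2k-1}},\qquad
\sum_{n\ge1}S_e(n)q^n=P(q)\sum_{k\ge1}\frac{2kq^{2k}}{1-q^{2k}} .
\]
The first reduction is that only one of the two congruences needs proof. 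Indeed, $S_o(n)+S_e(n)$ is the total size of all parts over all partitions of $n$ with distinct odd parts, namely $n$ times their number. Hence $\sum (S_o(n)+S_e(n))q^n=\theta P(q)$ with $\theta=q\frac{d}{dq}$. In particular $S_o(5n)+S_e(5n)\equiv 0\pmod 5$. So it suffices to show $S_e(5n)\equiv0\pmod5$. The statement about $S(2\mathbb{N}-1,5n)=S_o(5n)-S_e(5n)$ then follows immediately.

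For $S_e$ we use $\sum_{k}2kq^{2k}/(1-q^{2k})=-\theta\log(q^2;q^2)_\infty$. This collapses the generating function to
\[
\sum_{n\ge1}S_e(n)q^n=-\frac{\theta (q^2;q^2)_\infty}{(q;q)_\infty(q^4;q^4)_\infty}.
\]
Next we multiply numerator and denominator by $(q;q)_\infty^4(q^4;q^4)_\infty^4$ and use $(q;q)_\infty^5\equiv(q^5;q^5)_\infty\pmod 5$ and $(q^4;q^4)_\infty^5\equiv(q^{20};q^{20})_\infty\pmod5$. Since the resulting denominator is a series in $q^5$, it suffices to show that the coefficients of $q^{5n}$ in
\[
G(q):=(q;q)_\infty^3\,(q^4;q^4)_\infty^3\cdot (q;q)_\infty(q^4;q^4)_\infty\cdot\theta(q^2;q^2)_\infty
\]
vanish modulo $5$.

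Now we expand each factor as a theta-type series. By Jacobi, $(q;q)^3_\infty=\sum_{a\ge0}(-1)^a(2a+1)q^{a(a+1)/2}$, and similarly for $(q^4;q^4)_\infty^3$ with exponents $2b(b+1)$. By Euler's pentagonal theorem, $(q;q)_\infty$ and $(q^4;q^4)_\infty$ are sums over $q^{c(3c-1)/2}$ and $q^{2d(3d-1)}$. Finally, $\theta(q^2;q^2)_\infty=\sum_j(-1)^j j(3j-1)q^{j(3j-1)}$. The weights $2a+1$, $2b+1$ and $j(3j-1)$ vanish modulo $5$ exactly when the corresponding exponent lies in a specific residue class modulo $5$. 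For instance, $a(a+1)/2\equiv 3$ iff $a\equiv2$, and $j(3j-1)\equiv0$ iff $5\mid j(3j-1)$.

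The plan is then to complete squares. Set $8\cdot\frac{a(a+1)}2+1=(2a+1)^2$, and similarly for the other four exponents. The condition that the total exponent is $\equiv0\pmod5$ then becomes a congruence $\alpha_1x_1^2+\cdots+\alpha_5x_5^2\equiv 0\pmod 5$, with $x_i$ the linear forms $2a+1$, $2b+1$, $6c-1$, $6d-1$, $6j-1$. We want to show that every solution has either some weighted variable $\equiv0\pmod5$, or that the surviving terms cancel in pairs. Any leftover cancellation would be handled by an involution such as $a\mapsto -1-a$ or $c\mapsto 1/3-c$ taken modulo $5$.

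This residue analysis is the step I expect to be the main obstacle. A five-variable quadratic form modulo $5$ need not force a zero coordinate, so the bare quadratic-residue check may fail. If it does, I would fall back on a $5$-dissection of $(q;q)_\infty(q^4;q^4)_\infty$ (or of the whole product $P$), or on the transfer identity
\[
[q^{5n}]\,A\,\theta B\equiv-[q^{5n}]\,(\theta A)\,B\pmod 5,
\]
which lets the derivative be moved onto whichever factor makes the residue argument close.
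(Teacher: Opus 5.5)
\textbf{There is a genuine gap.} Your preliminary reductions are correct:
\begin{itemize}
\item the generating functions for $S_o$ and $S_e$;
\item $S_o(n)+S_e(n)=n\,p_{2\N-1}(n)$, so it suffices to prove $S_e(5n)\equiv 0\pmod 5$;
\item the closed form $\sum S_e(n)q^n=-\theta(q^2;q^2)_\infty/\bigl((q;q)_\infty(q^4;q^4)_\infty\bigr)$;
\item the reduction to the coefficients of $q^{5n}$ in $G(q)$.
\end{itemize}
However, the decisive step is never carried out, and the residue check you plan cannot work. For your five series, the exponent residues modulo $5$ that carry a weight $\not\equiv 0$ are $\{0,1\}$ for $a$, $\{0,4\}$ for $b$, $\{0,1,2\}$ for $c$, $\{0,3,4\}$ for $d$, and $\{2,4\}$ for $j$. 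These can sum to $0$. For instance, $(a,b,c,d,j)=(0,0,0,-1,1)$ contributes to $q^{0+0+0+8+2}=q^{10}$ with weight $(-1)\cdot(-2)=2\not\equiv 0\pmod 5$.

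So divisibility of $[q^{5n}]G$ would require genuine cancellation among tuples with the same exact exponent. The involutions you name do not supply it. The map $a\mapsto -1-a$ preserves both the exponent and the weight $(-1)^a(2a+1)$, so it pairs equal terms rather than cancelling ones. The map $c\mapsto 1/3-c$ "modulo $5$" preserves the exponent only modulo $5$, so it cannot pair terms contributing to the same coefficient. The fallbacks (a $5$-dissection, moving $\theta$ between factors) are left unspecified. As written, the proposal does not prove the congruence.

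\textbf{The missing idea.} Do not Frobenius-reduce $(q;q)_\infty$ and $(q^4;q^4)_\infty$ separately. Instead, package them using $(q;q)_\infty(q^4;q^4)_\infty(-q;-q)_\infty=(q^2;q^2)_\infty^3$, which gives $P(q)=(-q;-q)_\infty/(q^2;q^2)_\infty^2$. Then
\begin{align*}
3\sum_{n\ge1}S_e(n)q^n=-\frac{(-q;-q)_\infty\,\theta\bigl[(q^2;q^2)_\infty^3\bigr]}{(q^2;q^2)_\infty^5}\equiv-\frac{(-q;-q)_\infty}{(q^{10};q^{10})_\infty}\sum_{m\ge0}(-1)^m(2m+1)m(m+1)q^{m(m+1)}\pmod{5},
\end{align*}
and only two series remain. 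Both $k(3k-1)/2$ and $m(m+1)$ take only the residues $0,1,2$ modulo $5$. Hence a total exponent divisible by $5$ forces $5\mid m(m+1)$, which kills the weight; since $3$ is invertible modulo $5$, $S_e(5n)\equiv 0\pmod 5$.

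This is essentially the paper's proof. The paper runs the same two-series argument on the difference first, using $\sum S(2\N-1,n)q^n=\theta(-q;-q)_\infty/(q^2;q^2)_\infty^2$. It then adds and subtracts $S_o(5n)+S_e(5n)=5n\,p_{2\N-1}(5n)$. Your choice to prove the $S_e$ congruence first works equally well once this rewriting is used.
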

In the present paper, we display a reverse approach and offer a different proof of this result, establishing the divisibility property of $S(2\N - 1,5n)$ modulo 5 first and deriving the congruences for the individual sums from there.

Additionally, we prove a result modulo 4.
\begin{theorem}\label{4SDOME}
        For $n \in \mathbb{N},$ it holds that
    \begin{align*}
         S\left(2\mathbb{N}-1,4n \right)\equiv 0 \pmod{4}.
    \end{align*}
\end{theorem}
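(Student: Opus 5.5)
The approach is to compute the generating function of $S(2\mathbb{N}-1,n)$ explicitly, simplify the accompanying Lambert series modulo $4$, and then extract the terms whose exponents are divisible by $4$.

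\emph{Step 1: the generating function.} I introduce a variable $x$ that records $+k$ for each odd part $k$ and $-k$ for each even part $k$. Partitions with distinct odd parts and unrestricted even parts then have the two-variable generating function
\begin{align*}
\prod_{k \text{ odd}} \left(1+x^kq^k\right) \prod_{k \text{ even}} \left(1-x^{-k}q^k\right)^{-1}.
\end{align*}
Applying $\frac{\partial}{\partial x}$ and setting $x=1$ gives
\begin{align*}
\sum_{n=1}^\infty S\left(2\mathbb{N}-1,n\right)q^n = \frac{\left(-q;q^2\right)_\infty}{\left(q^2;q^2\right)_\infty}\, B(q), \qquad B(q):=\sum_{k \text{ odd}} \frac{kq^k}{1+q^k}-\sum_{k \text{ even}}\frac{kq^k}{1-q^k}.
\end{align*}

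\emph{Step 2: the Lambert series modulo $4$.} I expand $B$ coefficientwise. Write $N=2^a m$ with $m$ odd. The first sum contributes $\sum_{k\mid N,\,k \text{ odd}} (-1)^{N/k-1}k$. This equals $\sigma(m)$ if $a=0$ and $-\sigma(m)$ if $a\ge 1$. The second sum contributes $2\sigma(N/2)=2(2^a-1)\sigma(m)$ when $a \ge 1$. Hence the coefficient of $q^N$ in $B$ is $\sigma(N)$ for $N$ odd and $-(2^{a+1}-1)\sigma(m)$ for $a \ge 1$. Since $2^{a+1}-1\equiv -1 \pmod 4$ for $a\ge 1$, we get
\begin{align*}
B(q) \equiv \sum_{N\ge 1} \sigma(m_N)\,q^N=\sum_{a\ge 0} G\left(q^{2^a}\right) \pmod 4,
\end{align*}
where $m_N$ is the odd part of $N$ and $G(q)=\sum_{m \text{ odd}}\sigma(m)q^m$. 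By the classical identity $G(q)=q\,\psi(q^2)^4$ with $\psi(q)=(q^2;q^2)_\infty^2/(q;q)_\infty$, everything becomes a sum of eta-quotient type products.

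\emph{Step 3: the prefactor and dissection.} I rewrite the prefactor as $(-q;q^2)_\infty/(q^2;q^2)_\infty=(q^2;q^4)_\infty/\big((q;q^2)_\infty(q^2;q^2)_\infty\big)$. I then reduce the products modulo $4$ using $(q;q)_\infty^4\equiv(q^2;q^2)_\infty^2 \pmod 4$, together with its consequences for $(\pm q;q^2)_\infty$. The goal is to write the full product as $\sum_{a} q^{2^a}\,(\text{product})$ and show that the coefficient of each $q^{4n}$ vanishes modulo $4$.

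The terms with $a\ge 2$ have the form $G(q^{2^a})$ times the prefactor, and $G(q^{2^a})$ is already a series in $q^4$. So I will need the $4$-dissection of the prefactor modulo $4$, or at least modulo $2$ on the parts where the other factor is even. For $a=0,1$ I will use the $2$-dissections of $\psi(q)$ and of $1/(q;q^2)_\infty$ (Gauss/Ramanujan type) to isolate the exponents $\equiv 0 \pmod 4$.

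I expect the main obstacle to be this final $4$-dissection. In particular, the infinite sum over $a$ must collapse. My first attempt will be to show that $\sum_{a\ge2}G(q^{2^a})$ combines with the prefactor's $q^{4n}$-component to cancel the contributions coming from $a=0,1$. Working modulo $2$ first, where $G(q)\equiv\sum_{m \text{ odd}} q^{m^2}$, should reveal the structure. I will then lift to modulo $4$ using $\sigma(m)\equiv$ (number of divisor pairs) modulo $4$ for odd $m$.
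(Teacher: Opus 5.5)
Your Steps 1 and 2 are correct. $B(q)$ coincides with the paper's Lambert series $-\sum_{n\ge1}n(-q)^n/(1-(-q)^n)$, and the reduction $B(q)\equiv\sum_{N\ge1}\sigma(m_N)q^N\pmod 4$ is right. The gap is Step 3, which carries the whole content of the theorem but is only described as a programme. No $4$-dissection is computed, and the vanishing of the $q^{4n}$-coefficients modulo $4$ is never established. The programme is also unpromising as stated. Rewriting via $G(q)=q\psi(q^2)^4$ discards the product structure. Moreover, the tail $\sum_{a\ge2}G(q^{2^a})$ is exactly your series $\sum_{a\ge0}G(q^{2^a})$ with $q$ replaced by $q^4$. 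So pairing the terms $a\ge2$ with the $q^{4j}$-component of the prefactor reproduces a problem of the same shape rather than collapsing the sum.

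The missing idea is that the Lambert series is a logarithmic derivative of a product, and that product cancels against the prefactor. The paper uses $B(q)=q\frac{\partial}{\partial q}\log\left(\left(-q;-q\right)_\infty\right)$ (Lemma \ref{SDOMELemma}) together with $(-q;-q)_\infty=(q^2;q^2)_\infty(-q;q^2)_\infty$. This turns the generating function into $(q^2;q^2)_\infty^{-2}\,q\frac{\partial}{\partial q}(-q;-q)_\infty$, which the pentagonal number theorem makes explicit. The even exponents come from $k\equiv 0,3\pmod 4$; after $q^2\mapsto q$, a reduction modulo $4$ and a parity check modulo $2$ (using $(q;q)_\infty^{-2}\equiv(q^2;q^2)_\infty^{-1}$) finish the proof.

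Alternatively, your own Step 2 closes at once. Since $\sigma(m_N)$ is the sum of the odd divisors of $N$,
\begin{align*}
\sum_{N=1}^\infty\sigma(m_N)q^N=\sum_{k\text{ odd}}\frac{kq^k}{1-q^k}=q\frac{\partial}{\partial q}\log\frac{1}{\left(q;q^2\right)_\infty}.
\end{align*}
Using $(-q;q^2)_\infty(q;q^2)_\infty=(q^2;q^4)_\infty=(q^2;q^2)_\infty/(q^4;q^4)_\infty$ and Euler's $1/(q;q^2)_\infty=(-q;q)_\infty$, this gives
\begin{align*}
\sum_{n=1}^\infty S\left(2\mathbb{N}-1,n\right)q^n&\equiv\frac{\left(-q;q^2\right)_\infty\left(q;q^2\right)_\infty}{\left(q^2;q^2\right)_\infty}\, q\frac{\partial}{\partial q}\frac{1}{\left(q;q^2\right)_\infty}\\
&=\frac{1}{\left(q^4;q^4\right)_\infty}\sum_{M=1}^\infty M\,p_{\mathbb{N}}(M)q^M \pmod 4.
\end{align*}
The first factor is a power series in $q^4$. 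Hence the coefficient of $q^{4n}$ is $\sum_{j=0}^{n}p(j)\,(4n-4j)\,p_{\mathbb{N}}(4n-4j)\equiv 0\pmod 4$. This is the same mechanism the paper uses for Theorem \ref{4SOMDE}, and it avoids both the pentagonal-number analysis and any dissection.
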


Lastly, we also replicate this statement for the case of partitions with distinct even parts.

\begin{theorem}\label{4SOMDE}
        For $n \in \mathbb{N} $, it holds that
    \begin{align*}
        S\left(2\mathbb{N},4n\right)\equiv 0 \pmod{4}.
    \end{align*}
\end{theorem}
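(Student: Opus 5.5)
The plan is to first compute the generating function for $S(2\mathbb{N},n)$ explicitly, in the spirit of Theorem~\ref{thrmAndDast}, and then reduce it modulo $4$ to a form whose $4$-dissection can be read off. We introduce a variable $z$ that records each part $k$ as $z^{k}$ if $k$ is odd and $z^{-k}$ if $k$ is even. This gives
\begin{align*}
F(z,q)=\prod_{k \text{ odd}}\frac{1}{1-z^kq^k}\prod_{k\text{ even}}\left(1+z^{-k}q^k\right).
\end{align*}
Logarithmic differentiation at $z=1$ then yields
\begin{align*}
\sum_{n\ge1} S(2\mathbb{N},n)q^n = P(q)\left(\sum_{k \text{ odd}}\frac{kq^k}{1-q^k}-2\sum_{m\ge1}\frac{mq^{2m}}{1+q^{2m}}\right),
\end{align*}
where $P(q)=(-q^2;q^2)_\infty/(q;q^2)_\infty$. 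Using $1/(q;q^2)_\infty=(-q;q)_\infty=(q^2;q^2)_\infty/(q;q)_\infty$ and $(-q^2;q^2)_\infty=(q^4;q^4)_\infty/(q^2;q^2)_\infty$, this simplifies to $P(q)=(q^4;q^4)_\infty/(q;q)_\infty$.

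Next we reduce modulo $4$. Since $(q;q)_\infty^4\equiv (q^2;q^2)_\infty^2 \pmod 4$, we have
\begin{align*}
\frac{1}{(q;q)_\infty}\equiv\frac{(q;q)_\infty^3}{(q^2;q^2)_\infty^2}\pmod 4,
\end{align*}
and Jacobi's identity gives $(q;q)_\infty^3=\sum_{j\ge0}(-1)^j(2j+1)q^{j(j+1)/2}$. The bracket is a Lambert series, which we rewrite as a divisor-sum series. The first sum has $n$-th coefficient $\sigma_{\mathrm{odd}}(n)$, the sum of the odd divisors of $n$. The second sum carries an explicit factor $2$, so modulo $4$ only its coefficients modulo $2$ matter. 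Those coefficients are of the form $\sum_{d\mid m}(-1)^{m/d-1}d$, which modulo $2$ equals the number of odd divisors of $m$. We then use the parity of $\sigma_{\mathrm{odd}}$ (odd exactly when $n$ is a square or twice a square) and the value of $\sigma_{\mathrm{odd}}(n)$ modulo $4$. The aim is to express the bracket modulo $4$ through theta-type series such as $\sum q^{j^2}$ and $\sum q^{2j^2}$, plus terms supported on arithmetic progressions that the dissection can control.

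Finally, we multiply everything out and extract the terms $q^{4n}$. The factors $(q^4;q^4)_\infty$ and $1/(q^2;q^2)_\infty^2$ modulo $4$ depend only on $q^2$, and we split them into their $q^{4}$- and $q^{2}\cdot q^{4}$-parts. Since triangular numbers are $0,1,3 \pmod 4$, the Jacobi series splits as well, and we track which residue classes of the bracket pair with which classes of $j(j+1)/2$ to land on exponents $\equiv 0\pmod 4$. In each such pairing we want to show the coefficient vanishes modulo $4$. This should come either from the factor $2j+1$ combining with an even divisor-sum coefficient, or from terms cancelling in pairs under $j\mapsto -j-1$.

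The main obstacle is this last $4$-dissection. The divisor-sum series modulo $4$ is not obviously a clean theta function, so the bookkeeping can fail to close. If it does not close directly, I would first try to rewrite the whole generating function as a derivative of a theta quotient, mimicking the Andrews--Dastidar form $\sum q^n/(1+q^n)^2$, since derivatives of theta series produce manifest factors of $2$ in their coefficients. If that also fails, I would find a combinatorial involution on the relevant partitions of $4n$ that pairs contributions whose weights sum to a multiple of $4$.
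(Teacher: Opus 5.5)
You have a genuine gap: the proposal stops exactly where the proof has to happen. Your setup is correct. The generating function, the simplification $P(q)=(q^4;q^4)_\infty/(q;q)_\infty$, the congruence $(q;q)_\infty^4\equiv(q^2;q^2)_\infty^2 \pmod 4$ and Jacobi's identity all check out. But the $4$-dissection is only described, not carried out. You yourself concede the bookkeeping may not close, and neither fallback (theta-quotient derivative, combinatorial involution) is actually executed. As written, this does not prove the statement.

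The missing idea is one you almost state yourself. You note that the second sum in your bracket carries a factor $2$, so modulo $4$ only its coefficients modulo $2$ matter. In particular its sign is irrelevant, since $-2X\equiv 2X \pmod 4$. Flipping that sign turns the bracket into the logarithmic derivative of $P$:
\begin{align*}
\sum_{k \text{ odd}}\frac{kq^k}{1-q^k}+2\sum_{m\ge1}\frac{mq^{2m}}{1+q^{2m}} = q\frac{\partial}{\partial q}\log P(q).
\end{align*}
Hence $\sum_{n\ge1}S(2\mathbb{N},n)q^n\equiv qP'(q)=\sum_{n\ge1}n\,p_{2\mathbb{N}}(n)q^n \pmod 4$. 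The coefficient of $q^{4n}$ is then $4n\,p_{2\mathbb{N}}(4n)\equiv 0 \pmod 4$.

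This is exactly the paper's argument. The paper phrases it as $q^n/(1-(-q)^n)^2\equiv q^n/(1+(-q)^n)^2 \pmod 4$, which holds because $(1-x)^{-2}-(1+x)^{-2}=\sum_{k \text{ odd}}2(k+1)x^k$. It then identifies the latter Lambert series with $q\frac{\partial}{\partial q}\log P(q)$.

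Combinatorially it is even simpler. Let $S_o$ and $S_e$ be the total sums of odd and even parts over partitions of $n$ with distinct even parts. Then $S(2\mathbb{N},n)=(S_o+S_e)-2S_e=n\,p_{2\mathbb{N}}(n)-2S_e$, and $S_e$ is even because it is a sum of even parts. So no Jacobi identity, divisor-sum parity or dissection is needed.
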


This paper is structured in the following way: In Section 2 some preliminaries are given, that we will use throughout the paper. In Section 3 the generating function for $S\left(2\mathbb{N}-1,n\right)$ is derived and our proof for Theorem \ref{5SDOME} as well as the proof for Theorem \ref{4SDOME} are given. 
Section 4 deals with the generating function of $S\left(2\mathbb{N},n\right)$ and the proof of Theorem \ref{4SOMDE}. In the fifth and final section, we give representations for the generating functions in terms of the Eisenstein series $E_2.$

\section*{Acknowledgements}
The authors would like to thank Kathrin Bringmann for suggesting the topic of this paper and for guidance throughout this project. 
The first author is funded by the European
Research Council (ERC) under the European Union's Horizon 2020 research and innovation programme (grant agreement No. 101001179).
\section{Preliminaries}
In this section, we give some preliminaries that we will need in the proofs in the upcoming sections. We start by giving some generating functions for expressions we will use later. Recall that by Euler's  pentagonal number theorem \cite[p. 11]{AndToP} we have that
\begin{align*}
    \left(q;q\right)_\infty=\sum_{n=-\infty}^\infty (-1)^n q^\frac{n(3n-1)}{2},
\end{align*}
and therefore, 
\begin{align*}
        \left(-q;-q\right)_\infty=\sum_{n=-\infty}^\infty (-1)^{\frac{n(3n+1)}{2}} q^\frac{n(3n-1)}{2}. 
\end{align*}
For a set $D\subset \mathbb{N},$ we denote by $p_D(n)$ the number of partitions of $n,$ where each part that is also an element of $D$ does not repeat. 
Recall, that the generating function of $p_{\mathbb{N}}(n)$, i.e., the number of partitions of $n$ into distinct parts, is given by \cite[p. 5]{AndToP}
\begin{align*}
   \sum_{n=0}^\infty p_{\mathbb{N}}(n)q^n= \prod_{n=1}^\infty \left( 1+q^n\right) = \left(-q;q\right)_\infty. 
\end{align*}
The generating function for the number of partitions of $n$ with distinct even parts, $p_{2\mathbb{N}}(n) $, is given by
\begin{align*}
    \sum_{n=0}^\infty p_{2\mathbb{N}}(n)q^n= \frac{\left(-q^2;q^2\right)_\infty}{\left(q;q^2\right)_\infty}.
\end{align*}
Next, we give an identity involving the logarithmic derivative of $\left(-q;-q\right)_\infty$. This identity will be used at a later point, when we study the generating functions of $S\left(2\mathbb{N}-1,n\right).$ 
\begin{lemma}\label{SDOMELemma}
It holds that
    \begin{align*}
    \sum_{n=1}^\infty \frac{(-q)^n}{\left(1-\left(-q\right)^n \right)^2}= -q \frac{\partial}{\partial q} \textup{Log}\left(\left(-q;-q\right)_\infty \right). 
\end{align*}
\end{lemma}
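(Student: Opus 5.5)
This identity is the standard logarithmic-derivative computation for $(x;x)_\infty$, carried out with $x=-q$. I will work formally in $q$ (or for $|q|<1$, where everything converges absolutely) and proceed in two steps.

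\textbf{Step 1: the general identity.} Set $x=-q$. Taking the logarithm of the product termwise gives $\textup{Log}\left((x;x)_\infty\right)=\sum_{m\ge1}\textup{Log}(1-x^m)$. Differentiating,
\begin{align*}
x\frac{\partial}{\partial x}\textup{Log}\left((x;x)_\infty\right)=-\sum_{m\ge1}\frac{m x^m}{1-x^m}.
\end{align*}
Next I expand each summand as a geometric series, $\frac{x^m}{1-x^m}=\sum_{k\ge1}x^{mk}$, so that
\begin{align*}
\sum_{m\ge1}\frac{m x^m}{1-x^m}=\sum_{m,k\ge1} m x^{mk}.
\end{align*}
Interchanging the order of summation and using $\sum_{m\ge1} m y^m=\frac{y}{(1-y)^2}$ with $y=x^k$, the double sum becomes $\sum_{k\ge1}\frac{x^k}{(1-x^k)^2}$. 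This gives
\begin{align*}
\sum_{n\ge1}\frac{x^n}{(1-x^n)^2}=-x\frac{\partial}{\partial x}\textup{Log}\left((x;x)_\infty\right).
\end{align*}

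\textbf{Step 2: substitution.} Substitute $x=-q$. By the chain rule, $x\frac{\partial}{\partial x}=(-q)\cdot\left(-\frac{\partial}{\partial q}\right)=q\frac{\partial}{\partial q}$, so the Euler operator is unchanged by this substitution. The identity of Step 1 therefore becomes exactly the statement of Lemma~\ref{SDOMELemma}.

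\textbf{Main obstacle.} The only delicate point is justifying the rearrangements: termwise logarithm and differentiation, and swapping the double sum. For $|q|<1$ the double series $\sum_{m,k} m|q|^{mk}$ converges absolutely, so all these steps are valid; formally, every coefficient involves only finitely many terms. I also need to check the sign under $x=-q$ carefully, namely that $x\,d/dx=q\,d/dq$, which holds as shown in Step 2.
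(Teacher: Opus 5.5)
Your proposal is correct and follows essentially the same route as the paper: termwise logarithmic differentiation, expanding each $\frac{m x^m}{1-x^m}$ as a geometric series, interchanging the double sum, and using $\sum_{m\ge1} m y^m = \frac{y}{(1-y)^2}$. The only difference is organizational: you prove the identity for a general variable $x$ and then substitute $x=-q$ via $x\frac{\partial}{\partial x}=q\frac{\partial}{\partial q}$, whereas the paper differentiates directly in $q$ with the sign absorbed into the computation.
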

\begin{proof}
    A direct calculation gives
\begin{align*}
    -q \frac{\partial}{\partial q} \log\left(\left(-q;-q\right)_\infty \right) &=  -q \frac{\partial}{\partial q} \log\left( \prod_{n=1}^\infty \left(1-(-q)^n\right)\right)
 = -q \frac{\partial}{\partial q} \sum_{n=1}^\infty \log\left(1-(-q)^n \right)
 \\&=  \sum_{n=1}^\infty \frac{n (-q)^{n}}{1-(-q)^n}
= \sum_{n=1}^\infty n \sum_{m=1}^\infty (-q)^{nm}
 \\&= \sum_{m=1}^\infty\sum_{n=1}^\infty n((-q)^m)^n
 = \sum_{m=1}^\infty \frac{(-q)^m}{(1-(-q)^m)^2}. \qedhere
\end{align*}
\end{proof}

With these preliminaries, we are now ready to prove our main results. 
\section{Proofs for the claims regarding $S\left(2\mathbb{N}-1,n\right)$}
We start this section by deriving the generating function for $S\left(2\mathbb{N}-1,n\right)$.
\begin{theorem}\label{geneSDOME}
    For $n \in \mathbb{N}$, the generating function for $S\left(2\mathbb{N}-1,n\right)$ is given by
    \begin{align*}
        \sum_{n=1}^\infty S\left(2\mathbb{N}-1,n\right)q^n= -\frac{\left(-q;q^2\right)_\infty}{\left(q^2;q^2\right)_\infty} \sum_{n=1}^\infty \frac{\left(-q\right)^n}{\left(1-(-q)^n\right)^2} .
    \end{align*}
\end{theorem}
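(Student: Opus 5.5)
The plan is to introduce a two-variable generating function that marks each part by a weight: $+z$ for odd parts and $-z$ for even parts. We then differentiate with respect to $z$ and set $z=1$.

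Concretely, partitions with distinct odd parts and unrestricted even parts have the generating function
\begin{align*}
F(z;q)=\prod_{n=1}^\infty \left(1+z^{2n-1}q^{2n-1}\right)\prod_{n=1}^\infty \frac{1}{1-z^{-2n}q^{2n}},
\end{align*}
where an odd part $k$ contributes $z^{k}$ and an even part $k$ contributes $z^{-k}$. The exponent of $z$ is then (sum of odd parts) $-$ (sum of even parts). Hence $\sum_n S(2\mathbb{N}-1,n)q^n=\frac{\partial}{\partial z}F(z;q)\big|_{z=1}$.

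At $z=1$ we have $F(1;q)=\frac{(-q;q^2)_\infty}{(q^2;q^2)_\infty}$, so the derivative equals $F(1;q)$ times the logarithmic derivative in $z$ at $z=1$. That logarithmic derivative is
\begin{align*}
\sum_{n=1}^\infty \frac{(2n-1)q^{2n-1}}{1+q^{2n-1}}-\sum_{n=1}^\infty \frac{2n\,q^{2n}}{1-q^{2n}}.
\end{align*}
Set $m=2n-1$ and $m=2n$, and note that $(-q)^m=-q^m$ for odd $m$ and $(-q)^m=q^m$ for even $m$. Both sums then combine into the single expression
\begin{align*}
-\sum_{m=1}^\infty \frac{m(-q)^m}{1-(-q)^m}.
\end{align*}
For odd $m$ the term is $-m(-q^m)/(1+q^m)=mq^m/(1+q^m)$, and for even $m$ it is $-mq^m/(1-q^m)$, matching the two sums above.

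Finally, expand $\frac{m x}{1-x}=\sum_{k\ge1} m x^k$ with $x=(-q)^m$ and swap the order of summation, exactly as in the proof of Lemma \ref{SDOMELemma}. This gives
\begin{align*}
\sum_{m\ge1}\frac{m(-q)^m}{1-(-q)^m}=\sum_{k\ge1}\frac{(-q)^k}{(1-(-q)^k)^2}.
\end{align*}
This identity could equally be quoted directly from Lemma \ref{SDOMELemma}, since the middle line of its proof shows it. Multiplying by $F(1;q)$ yields the stated formula.

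The only delicate step is the sign bookkeeping when merging the odd and even sums into one sum in $-q$. The remaining manipulations are justified by absolute convergence for $|q|<1$.
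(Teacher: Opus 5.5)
Your proposal is correct and follows essentially the same route as the paper. You use the same weighted product $\prod_{n\ge1}(1+(zq)^{2n-1})/(1-(q/z)^{2n})$, take its logarithmic derivative at $z=1$, merge the odd and even sums into $-\sum_{m\ge1} m(-q)^m/(1-(-q)^m)$, and finish with the same expand-and-swap step as in Lemma \ref{SDOMELemma}.
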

\begin{proof} A direct calculation gives
       \begin{align*}
        \sum_{n=1}^\infty S\left(2\mathbb{N}-1,n\right)q^n &=  \frac{\partial }{\partial z} \bigg|_{z=1}\prod_{n=1}^\infty  \frac{1+(zq)^{2n-1}}{1-\left(\frac{q}{z}\right)^{2n}}
     \\&= \sum_{n=1}^\infty \left( \frac{(2n-1)q^{2n-1}}{1-q^{2n}} - \frac{2nq^{2n}\left(1+q^{2n-1} \right)}{\left(1-q^{2n}\right)^2} \right) \prod_{\substack{m=1\\m\neq n}}^\infty \frac{1+q^{2m-1}}{1-q^{2m}}
     \\&= \frac{\left(-q;q^2\right)_\infty}{\left(q^2;q^2\right)_\infty}  \sum_{n=1}^\infty \frac{(2n-1)q^{2n-1}}{1+q^{2n-1}} - \frac{2nq^{2n}}{1-q^{2n}}
     \\&= -\frac{\left(-q;q^2\right)_\infty}{\left(q^2;q^2\right)_\infty} \sum_{n=1}^\infty \frac{n\left(-q\right)^n}{1-\left(-q\right)^n }
    = -\frac{\left(-q;q^2\right)_\infty}{\left(q^2;q^2\right)_\infty} \sum_{n=1}^\infty n \sum_{m=1}^\infty \left( \left(-q\right)^n\right)^m
    \\ &= -\frac{\left(-q;q^2\right)_\infty}{\left(q^2;q^2\right)_\infty} \sum_{m=1}^\infty \frac{\left(-q\right)^m}{\left(1-\left(-q\right)^m\right)^2} \qedhere.
    \end{align*}
\end{proof}

\begin{rmk}
    Garvan and Sarma \cite{GarSar} computed the generating functions for $S_o(n)$ and $S_e(n)$ separately, from which one may then also obtain the generating function in Theorem \ref{geneSDOME}.
\end{rmk}

The generating function of $S\left(2\mathbb{N}-1,n\right)$ is an important ingredient in the next proof.  
\begin{proof}[Proof of Theorem \textup{\ref{5SDOME}}]
 This proof follows the structure of the proof of Theorem 4 presented by Andrews and Dastidar in \cite{AndDast}. In particular, it is reversed compared to the strategy employed by Garvan and Sarma in \cite{GarSar}.
 Using  Theorem \ref{geneSDOME} first, then Lemma \ref{SDOMELemma} and then (2.1) we get that
 \begin{align*}
        \sum_{n=1}^\infty S\left(2\mathbb{N}-1,n\right)q^n &= -\frac{\left(-q;q^2\right)_\infty}{\left(q^2;q^2\right)_\infty} \sum_{n=1}^\infty \frac{\left(-q\right)^n}{\left(1-(-q)^n\right)^2} 
        \\ &=\frac{\left(-q;q^2\right)_\infty}{\left(q^2;q^2\right)_\infty}  q \frac{\partial}{\partial q} \log\left(\left(-q;-q\right)_\infty \right)
    \\&= \frac{\left(-q;q^2\right)_\infty}{\left(q^2;q^2\right)_\infty} q \frac{\partial}{\partial q} \log \left(\sum_{k=-\infty}^\infty (-1)^{\frac{k(3k+1)}{2}} q^\frac{k(3k-1)}{2}\right)
    \\&=  \frac{\left(-q;q^2\right)_\infty} {\left(q^2;q^2\right)_\infty\left(-q;-q\right)_\infty} \sum_{k=-\infty}^\infty (-1)^{\frac{k(3k+1)}{2}} \frac{k(3k-1)}{2} q^{\frac{k(3k-1)}{2}} 
    \\&= \frac{1}{\left(q^2;q^2\right)_\infty^2} \sum_{k=-\infty}^\infty (-1)^{\frac{k(3k+1)}{2}} \frac{k(3k-1)}{2} q^{\frac{k(3k-1)}{2}}
    \\ &=   \frac{\left(q^2;q^2\right)_\infty^3}{\left(q^2;q^2\right)_\infty^5} \sum_{k=-\infty}^\infty (-1)^{\frac{k(3k+1)}{2}} \frac{k(3k-1)}{2} q^{\frac{k(3k-1)}{2}}.
 \end{align*}
Modulo 5, the last expression is congruent to 
\begin{align*}
 \frac{1}{\left(q^{10};q^{10}\right)_\infty} \sum_{m=0}^\infty (-1)^m (2m+1)q^{m^2+m}  \sum_{k=-\infty}^\infty (-1)^{\frac{k(3k+1)}{2}} \frac{k(3k-1)}{2} q^{\frac{k(3k-1)}{2}}.
\end{align*}
Reducing the exponents modulo  $5$, one checks when $m^2+m + \frac{k(3k-1)}{2}$ is divisible by 5. This is only the case if both $m^2+m$ as well as $ \frac{k(3k-1)}{2}$ are divisible by 5. In these cases, the coefficient $ \frac{k(3k-1)}{2}$ of the right sum is divisible by 5 and thus, the coefficients of $q^{5n}$ for any $n \in \mathbb{N}$ vanish modulo 5. For the other claim, we have 
    \begin{align*}
        {S}_o(5n)+{S}_e(5n)=5n p_{2\mathbb{N}-1}(5n)\equiv 0 \pmod{5}.
    \end{align*}
    By Theorem \ref{5SDOME} we also have
    \begin{align*}
        {S}_o(5n)-{S}_e(5n)=S\left(2\mathbb{N}-1,5n\right)\equiv 0 \pmod{5}.
    \end{align*}
    Adding and subtracting these two congruences  yields
    \begin{align*}
         2{S}_o(5n) \equiv 0 \pmod{5} \text{ and   }  2 {S}_e(5n)\equiv 0 \pmod{5}.
    \end{align*} This yields the result. 
\end{proof}
\begin{proof}[Proof of Theorem \textup{\ref{4SDOME}}]
      Our argument follows the structure of the proof of Theorem 8 in \cite{AndDast}. Recall that from the proof of Theorem \ref{5SDOME}  we know that
      \begin{align*}
           \sum_{n=1}^\infty S\left(2\mathbb{N}-1,n\right)q^n =  \frac{1}{\left(q^2;q^2\right)_\infty^2} \sum_{k=-\infty}^\infty (-1)^{\frac{k(3k+1)}{2}} \frac{k(3k-1)}{2} q^{\frac{k(3k-1)}{2}}.
      \end{align*}
      
      We start by isolating the even powers of $q,$ writing
\begin{align*}
    \frac{1}{\left(q^2;q^2\right)_\infty^2}&\sum_{k=-\infty}^\infty (-1)^{\frac{k(3k+1)}{2}} \frac{k(3k-1)}{2} q^{\frac{k(3k-1)}{2}} \frac{1}{2}\left(1+(-1)^{\frac{k(3k-1)}{2}} \right)
    \\&= \frac{1}{\left(q^2;q^2\right)_\infty^2}\sum_{k=-\infty}^\infty \left((-1)^{\frac{k(3k+1)}{2}}+(-1)^k\right) \frac{k(3k-1)}{4} q^{\frac{k(3k-1)}{2}}
    \\&= \frac{1}{\left(q^2;q^2\right)_\infty^2}\sum_{n=-\infty}^\infty  2n(12n-1) q^{\frac{4n(12n-1)}{2}}- (4n+3)(6n+4) q^{\frac{(4n+3)(12n+8)}{2}},
\end{align*}
as 
\begin{align*}
  (-1)^\frac{k(3k+1)}{2} +(-1)^k = \begin{cases}
        2 & k\equiv 0 \pmod{4},\\
        -2 &  k\equiv 3 \pmod{4},\\
        0 & \text{otherwise.}
    \end{cases}
\end{align*}
Replacing $q^2 $ by$ q,$ we get
\begin{align*}
    &\phantom{\equiv} \frac{1}{\left(q;q\right)_\infty^2}\sum_{n=-\infty}^\infty  2n(12n-1)q^{n(12n-1)}- (4n+3)(6n+4) q^{(4n+3)(3n+2)}
    \\ &\equiv \frac{2}{\left(q;q\right)_\infty^2} \sum_{n=-\infty}^\infty -nq^{n(12n-1)} - (9n+6)q^{(3n+2)(4n+3)} \pmod{4}.
\end{align*}
Lastly, we have to show that the coefficients of the even powers of $q$ are divisible by 4. This is equivalent to showing that the coefficients of the even powers of $q$ in 
\begin{align*}
    \frac{1}{\left(q;q\right)_\infty^2} \sum_{n=-\infty}^\infty -nq^{n(12n-1)} - (9n+6)q^{(3n+2)(4n+3)}
\end{align*}
are divisible by 2. Evaluating modulo 2, we get
\begin{align*}
    \frac{1}{\left(q;q\right)_\infty^2} &\sum_{n=-\infty}^\infty -nq^{n(12n-1)} - (9n+6)q^{(3n+2)(4n+3)} 
  \\ &\equiv \frac{1}{\left(q^2;q^2 \right)_\infty} \sum_{n=-\infty}^\infty nq^{n(12n-1)}+nq^{(3n+2)(4n+3)}  \pmod{2}.
\end{align*}
Here the exponents of $q$ are even if and only if $n$ is even.  
\end{proof}

\section{Proofs of the claims regarding $S\left(2\mathbb{N},n\right)$}
As we did in the last section, we start with deriving the generating function for $S\left(2\mathbb{N},n\right)$.
\begin{theorem}\label{geneSOMDE}
    For $n \in \mathbb{N}_0,$ the generating function for $S\left(2\mathbb{N},n\right)$ is given by
    \begin{align*}
        \sum_{n=1}^\infty S\left(2\mathbb{N},n\right)q^n=  \frac{\left(-q^2;q^2\right)_\infty}{\left(q;q^2\right)_\infty} \sum_{n=1}^\infty \frac{q^n}{\left(1-(-q)^n\right)^2}. 
    \end{align*}
\end{theorem}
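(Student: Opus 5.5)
The plan mirrors the proof of Theorem \ref{geneSDOME}. We introduce a marking variable $z$ that records odd parts with weight $+1$ and even parts with weight $-1$. Odd parts are unrestricted and even parts are distinct, so the two-variable generating function is
\begin{align*}
F(z,q)=\prod_{n=1}^\infty \frac{1+\left(\frac{q}{z}\right)^{2n}}{1-(zq)^{2n-1}}.
\end{align*}
Here a part $j$ contributes $z^{j}$ if it is odd and $z^{-j}$ if it is even. Hence $\frac{\partial}{\partial z}\big|_{z=1}F(z,q)=\sum_{n\ge1}S(2\mathbb{N},n)q^n$.

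The first step is to differentiate the product by logarithmic differentiation, which is cleaner than the product rule. At $z=1$ the product itself is $\frac{(-q^2;q^2)_\infty}{(q;q^2)_\infty}$. The logarithmic derivative is
\begin{align*}
\sum_{n=1}^\infty \left(\frac{(2n-1)q^{2n-1}}{1-q^{2n-1}}-\frac{2nq^{2n}}{1+q^{2n}}\right).
\end{align*}

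The key step is to recognise this sum as $\sum_{m\ge1}\frac{mq^m}{1+(-1)^{m+1}q^m}$. That is, the denominator is $1-q^m$ for odd $m$ and $1+q^m$ for even $m$, and the numerator carries the sign $+$ for odd $m$ and $-$ for even $m$. We can write this uniformly as $\sum_{m\ge1}\frac{m(-1)^{m+1}q^m}{1-(-q)^m}$. Indeed, for even $m$ we have $(-q)^m=q^m$, so the denominator would be $1-q^m$. That is wrong, so this uniform form needs care.

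Checking each parity of $m$ directly: for odd $m$ the term is $\frac{mq^m}{1-q^m}=\frac{mq^m}{1+(-q)^m}$, and for even $m$ it is $-\frac{mq^m}{1+q^m}=-\frac{mq^m}{1+(-q)^m}$. So the sum equals $\sum_m \frac{(-1)^{m+1}mq^m}{1+(-q)^m}=-\sum_m\frac{m(-q)^m}{1+(-q)^m}$.

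Next we expand geometrically in $(-q)^m$, as in the proof of Lemma \ref{SDOMELemma}:
\begin{align*}
-\sum_{m}m\sum_{k\ge1}(-1)^{k-1}(-q)^{mk}=\sum_{k\ge1}(-1)^{k}\frac{(-q)^k}{(1-(-q)^k)^2}=\sum_{k\ge1}\frac{q^k}{(1-(-q)^k)^2}.
\end{align*}
Multiplying by the value of the product at $z=1$ gives the claimed formula.

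The main obstacle is this sign and parity bookkeeping, namely the two uniformizations. The first is rewriting the odd/even split in terms of $-q$. The second is interchanging the double sum so that the extra factor $(-1)^{k-1}$ from expanding $1/(1+x)$ combines with $(-q)^k$ to give exactly $q^k$. We handle both by separating the parities of $m$ (and of $k$) and checking each case. The interchange of summations is justified formally in $\mathbb{Z}[[q]]$, since each coefficient involves only finitely many terms.
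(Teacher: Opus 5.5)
Your proof is correct and follows the paper's proof step for step. You use the same two-variable product and differentiate at $z=1$; your logarithmic derivative is exactly the paper's third line. You then rewrite it as $-\sum_{m\ge1}\frac{m(-q)^m}{1+(-q)^m}$ and finish with the geometric expansion and interchange of summation.

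In a final write-up, delete the two false starts. In particular, your first displayed form $\sum_{m\ge1}\frac{mq^m}{1+(-1)^{m+1}q^m}$ swaps the denominators $1-q^m$ and $1+q^m$ and omits the factor $(-1)^{m+1}$. Keep only the parity check that follows it, which correctly gives $\sum_{m\ge1}\frac{(-1)^{m+1}mq^m}{1+(-q)^m}$.
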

\begin{proof}
    A direct calculation gives
       \begin{align*}
        \sum_{n=1}^\infty S\left(2\mathbb{N},n\right)q^n &=  \frac{\partial }{\partial z} \bigg|_{z=1}\prod_{n=1}^\infty  \frac{1+\left(\frac{q}{z} \right)^{2n}}{1-\left(zq\right)^{2n-1}}
        \\&= \left( \sum_{n=1}^\infty  \frac{-2nq^{2n}}{1-q^{2n-1}} + \frac{\left(1+q^{2n}\right)(2n-1)q^{2n-1}}{\left(1-q^{2n-1} \right)^2} \right)  \prod_{\substack{m=1\\m\neq n}}^\infty \frac{1+q^{2m}}{1-q^{2m-1}}
        \\&= \frac{\left(-q^2;q^2\right)_\infty}{\left(q;q^2\right)_\infty} \sum_{n=1}^\infty \frac{-2nq^{2n}}{1+q^{2n}}+ \frac{(2n-1)q^{2n-1}}{1-q^{2n-1}}
        \\&= -\frac{\left(-q^2;q^2\right)_\infty}{\left(q;q^2\right)_\infty} \sum_{n=1}^\infty \frac{2n\left(-q\right)^{2n}}{1+ \left(-q\right)^{2n}}+ \frac{(2n-1)\left(-q\right)^{2n-1}}{1+ \left(-q\right)^{2n-1}}
        \\&= -\frac{\left(-q^2;q^2\right)_\infty}{\left(q;q^2\right)_\infty} \sum_{n=1}^\infty  \frac{n\left(-q\right)^n}{1+\left(-q\right)^n} = \frac{\left(-q^2;q^2\right)_\infty}{\left(q;q^2\right)_\infty} \sum_{n=1}^\infty  n \sum_{m=1}^\infty \left(- \left(-q\right)^n \right)^m
        \\&= \frac{\left(-q^2;q^2\right)_\infty}{\left(q;q^2\right)_\infty} \sum_{m=1}^\infty (-1)^m \sum_{n=1}^\infty n\left(\left(-q\right)^m \right)^n
        =\frac{\left(-q^2;q^2\right)_\infty}{\left(q;q^2\right)_\infty} \sum_{m=1}^\infty \frac{q^m}{\left(1-\left(-q\right)^m\right)^2}. \qedhere     
        \end{align*}
\end{proof}
\begin{proof}[Proof of Theorem \textup{\ref{4SOMDE}}]
    The proof of Theorem \ref{4SOMDE} follows the structure of the proof of Theorem 9 in \cite{AndDast}.
    We have by (2.3) that 
    \begin{align*}
        \sum_{n=1}^\infty np_{2\mathbb{N}}(n)q^n &= q \frac{\partial }{\partial q}\frac{\left(-q^2;q^2\right)_\infty}{\left(q;q^2\right)_\infty}
          \\ &=q \sum_{m=1}^\infty \left(\frac{2mq^{2m-1}}{1-q^{2m-1}} + \frac{(2m-1)\left(1+q^{2m}\right) q^{2m-2}}{\left(1-q^{2m-1}\right)^2} \right)\prod_{\substack{n=1\\n\neq m}}^\infty \frac{1+q^{2n}}{1-q^{2n-1}}
    \\&=\frac{\left(-q^2;q^2\right)_\infty}{\left(q;q^2\right)_\infty} \sum_{m=1}^\infty \frac{mq^m}{1+(-q)^m}. 
    \end{align*}
Rewriting the inner sum, we get
\begin{align*}
    \sum_{m=1}^\infty \frac{mq^m}{1+(-q)^m} &= -\sum_{m=1}^\infty \frac{(-1)^mm\left(-\left(-q\right)^m\right)}{1-\left(-(-q)^m\right)} 
= -\sum_{m=1}^\infty (-1)^mm \sum_{n=1}^\infty \left(-(-q)^m\right)^n
\\&=-\sum_{n=1}^\infty (-1)^n \sum_{m=1}^\infty m \left(-(-q)^n\right)^m
    = -\sum_{n=1}^\infty (-1)^n\frac{-(-q)^n}{\left(1+(-q)^n\right)^2}
\\&=\sum_{n=1}^\infty \frac{q^n}{\left(1+(-q)^n\right)^2}.
\end{align*}
Combining this gives     
\begin{align*}
    \sum_{n=1}^\infty S\left(2\mathbb{N},n\right)q^n&= \frac{\left(-q^2;q^2\right)_\infty}{\left(q;q^2\right)_\infty} \sum_{n=1}^\infty \frac{q^n}{\left(1-(-q)^n\right)^2}
    \\&\equiv \frac{\left(-q^2;q^2\right)_\infty}{\left(q;q^2\right)_\infty} \sum_{n=1}^\infty \frac{q^n}{\left(1+(-q)^n\right)^2} \pmod{4}
    \\&= \sum_{n=1}^\infty n p_{2\mathbb{N}}(n)q^n .
\end{align*}
Since $4n\cdot p_{2\mathbb{N}}(4n)$ is divisible by 4, so the claim holds. 
\end{proof}

\section{Some identities for the generating functions}
In this section, we give some identities for the generating functions considered in this paper in terms of the Eisenstein series $E_2$. As usual, we define the divisor sum $\sigma$ for $n \in N$,
\begin{align*}
    \sigma(n):=\sum_{d|n}  d
\end{align*}
and with this
\begin{align*}
    E_2(\tau):= 1- 24\sum_{k=1}^\infty \sigma(k)q^k,
\end{align*}
where $q:= e^{2\pi i \tau}$ and $\tau \in \mathbb{H}:=\{z \in \mathbb{C}| \text{Im}(z)>0 \}$.
We start with the generating function of $S(\emptyset,n)$. From \cite{AndDast} we know that
\begin{align*}
    \sum_{n=1}^\infty \frac{q^n}{\left(1+q^n\right)^2}= \sum_{m=1}^\infty \sum_{n=1}^\infty (-1)^{n-1}nq^{mn}= \sum_{k=1}^\infty q^k \sum_{d|k}(-1)^{d-1}d.
\end{align*}
We closely examine the inner sum and obtain
\begin{align*}
    \sum_{d|k}(-1)^{d-1} d =  \sum_{2\nmid d|k} d - \sum_{2|d|k} d=  \sigma(k)- 2\sum_{2|d|k} d .
\end{align*}
The rightmost sum vanishes, if $k$ is odd. If $k$ is even, we obtain
\begin{align*}
    2\sum_{2|d|k} d =4\sum_{\frac{d}{2}|\frac{k}{2}} \frac{d}{2}= 4\sigma\left(\frac{k}{2}\right). 
\end{align*}
Employing Theorem \ref{thrmAndDast}, this gives
\begin{align*}
    \sum_{n=1}^\infty S(\emptyset , n) q^n &= \frac{1}{\left(q;q\right)_\infty}  \left(\sum_{k=1}^\infty \sigma(k) q^k  - 4\sum_{\substack{k=1\\2|k}}^\infty \sigma\left(\frac{k}{2} \right)q^k\right)
\\ &= \frac{1}{\left(q;q\right)_\infty} \left( \sum_{k=1}^\infty \sigma(k)q^k- 4\sum_{k=1}^\infty  \sigma(k) q^{2k}\right)
\\&= \frac{1}{24\left(q;q\right)_\infty} \left(-E_2(\tau)+4E_2(2\tau)-3\right).
\end{align*}
Next, we consider the generating function of $S(\mathbb{N},n).$  By \cite{AndDast} we have 
\begin{align*}
    \sum_{n=1}^\infty \frac{(-1)^{n-1}q^n}{\left(1+q^n\right)^2} = \sum_{m=1}^\infty\sum_{n=1}^\infty (-1)^{n+m}nq^{mn}= \sum_{k=1}^\infty q^k \sum_{d|k} (-1)^{d+\frac{k}{d}}d.
\end{align*}
We again examine the inner sum. If $k$ is odd, we find
\begin{align*}
    \sum_{d|k} (-1)^{d+\frac{k}{d}}d= \sum_{d|k}d=\sigma(k).
\end{align*}
For the case that $k$ is even, rewrite the sum as a convolution 
\begin{align*}
    \sum_{d|k} (-1)^{d+\frac{k}{d}}d&= \left(2u-I_0 \right) I_1 * \left(2u-I_0\right)(k)
    \\&= (2uI_1*2u)(k)-(2uI_1*I_0)(k)-(I_1*2u)(k)+(I_1*I_0)(k),
\end{align*}
where $(f*g)(n):= \sum_{d|n} f(d) g\left(\frac{n}{d}\right),$  $I_\ell(n)=n^\ell$ and 
\begin{align*}
    u(n):=\mathbf{1}_{\{n \in \N: 2\mid n\}}:=\begin{cases}
        1 & 2|n,\\
        0 & \text{else.}
    \end{cases} .
\end{align*} 
Evaluating these convolutions gives:
\begin{align*}
    (2uI_1*2u)(k)&= 4\sum_{\substack{d|k\\2|d,2|\frac{k}{d}}}d = \begin{cases}
        8\sum_{d|\frac{k}{4}}d=8\sigma\left(\frac{k}{4}\right) & \text{if } 4\mid k,\\
        0 & \text{if } 4\nmid k,
    \end{cases}\\
    -(2uI_1*I_0)(k)&=-2\sum_{\substack{2|d|k}}d= -4 \sum_{\frac{d}{2}|\frac{k}{2}} \frac{d}{2} = -4\sigma\left(\frac{k}{2}\right), \\
    -(I_1*2u)(k)&= -2\sum_{\substack{d\mid k\\2|\frac{k}{d}}} d=-2\sum_{d\mid  \frac{k}{2}}d=-2\sigma\left( \frac{k}{2}\right),s
\intertext{and lastly,}
    (I_1*I_0)(k)&= \sum_{d|k}d =\sigma(k).
\end{align*}
Again, by Theorem \ref{thrmAndDast} we obtain
\begin{align*}
    \sum_{n=1}^\infty S\left(\mathbb{N},n\right)q^n &= \left(-q;q\right)_\infty \left( \sum_{k=1}^\infty \sigma(k) q^k -6 \sum_{k=1}^\infty \sigma(k) q^{2k} +8 \sum_{k=1}^\infty \sigma(k) q^{4k}\right) 
    \\&= \frac{\left(-q;q\right)_\infty}{24} \left(-E_2(\tau)+6E_2(2\tau)-8E_2(4\tau)+3\right).
\end{align*}
Recall that both $-E_2(\tau)+2E_2(2\tau)$ and $E_2(\tau)-4E_2(4\tau)$ are modular forms of weight 2 and level 4, so
\begin{align*}
       \sum_{n=1}^\infty S\left(\mathbb{N},n\right)q^n &= \left(-q;q\right)_\infty \left(F(\tau)+3\right),
\end{align*}
where $F(\tau):= E_2(\tau)+6E_2(2\tau)-8E_2(4\tau),$ is a modular form on $\Gamma_0(4).$
Next, we consider the generating functions of $S(2\mathbb{N}-1,n)$ and $S(2\mathbb{N},n).$ From our previous calculations in the proof of Lemma \ref{SDOMELemma} we know that 
\begin{align*}
    \sum_{n=1}^\infty \frac{(-q)^n}{\left(1-(-q)^n\right)^2} &= \sum_{m=1}^\infty \sum_{n=1}^\infty m(-q)^{mn},
\end{align*}
substituting $k=mn,$ we obtain
\begin{align*}
     \sum_{m=1}^\infty \sum_{n=1}^\infty m(-q)^{mn}= \sum_{k=1}^\infty (-q)^k \sum_{d|k}d=\sum_{k=1}^\infty \sigma(k) \left(-q \right)^k = -\frac{E_2\left(\tau+ \frac{1}{2}\right)-1}{24},
\end{align*}
which, in combination with Theorem \ref{geneSDOME}, gives
\begin{align*}
    \sum_{n=1}^\infty S(2\mathbb{N}-1,n)q^n = \frac{ \left(-q;q^2\right)_\infty\left( E_2 \left( \tau + \frac{1}{2} \right)-1 \right)}{24 \left(q^2;q^2\right)_\infty}. 
\end{align*}
Arguing as before, we also obtain
\begin{align*}
     \sum_{n=1}^\infty \frac{q^n}{\left(1-(-q)^n\right)^2}& = \sum_{m=1}^\infty \sum_{n=1}^\infty (-1)^nm(-q)^{mn} = \sum_{k=1}(-q)^k \sum_{d|k} (-1)^\frac{k}{d}d\\
     &=-\sum_{k=1}^\infty \sigma(k) \left(-q \right)^k + 2\sum_{k=1}^\infty \sigma(k)q^{2k},
\end{align*}
which, together with Theorem \ref{geneSOMDE}, yields
\begin{align*}
    \sum_{n=1}^\infty S(2\mathbb{N},n)q^n= \frac{\left(-q^2;q^2\right)_\infty\left(E_2 \left(\tau + \frac{1}{2} \right)-2E_2(2\tau) +1\right)}{24\left(q;q^2\right)_\infty} .
\end{align*}

\end{document}